\newtheorem*{defn}{Definition}
\newtheorem*{notation}{Notation}
\newtheorem{theorem}{Theorem}
\newtheorem{proposition}{Proposition}
\newtheorem*{lemma*}{Lemma}
\newtheorem*{thrm*}{Theorem}
\newcommand{\cinf}{C^\infty}
\newcommand{\cs}{\mathcal{E}^\prime}
\newcommand{\WF}{\mathop{\mathrm{WF}}}
\newcommand{\R}{\mathbb{R}}
\title{Cancellation of Singularities in SAR for \\ Curved Flight Paths and Non-Flat Topography}
\author{Andrew Homan}
\date{}
\begin{document}
\maketitle

\begin{abstract}
  We consider a mathematical model of synthetic aperture radar (SAR)
  with a known, possibly non-flat, topography. In this context we
  consider the problem of recovering the wavefront set of the ground
  reflectivity, given radar data measured along a curved flight
  path. We show that if singularities are located at ``mirror
  points,'' then the resulting data may be smooth; in effect, the
  singularities ``cancel.'' With a flat topography, these mirror
  points are always discrete, but we show that in a non-flat
  topography there may be infinite families of mirror points.
\end{abstract}

\section{Introduction}

Synthetic aperture radar (SAR) is a radar imaging technique using an
airplane or satellite flying along a known flight path to illuminate
the Earth (or another planet) with radar, while at the same time
measuring the backscattered signal. From this data, one would like to
image the electromagnetic reflectivity of the ground. We refer the
reader to \cite{cheney09} and references therein for a practical
introduction to SAR. It has been adapted for many applications,
including imaging through the forest canopy \cite{cheney04, cloude04,
  varslot10}, imaging ocean waves \cite{collard05, swift79}, and more
recently for material identification \cite{albanese13}.

When the same antenna is used to produce the incident wave and measure
the scattered wave, the configuration is referred to as monostatic
SAR, which is the object of our study. Under an assumption that the
surface being imaged is approximately flat, the forward operator
mapping scene to data can be modelled as a restricted circular Radon
transform \cite{hellsten87, stefanov13}. This operator has been
studied in the context of integral geometry \cite{ambartsoumian05,
  ambartsoumian06, agranovsky07} and also as a model of thermoacoustic
tomography \cite{haltmeier07, finch04, kuchment08}.

A related SAR configuration requires two antennas, one emitting and
one receiving, which follow distinct flight paths. This is called
bistatic SAR, and there has been some study of the restricted
elliptical Radon transform as a model for it \cite{krishnan11,
  coker07}. It can also be used to model the artifacts occuring in
monostatic SAR in the presence of a reflecting wall
\cite{ambartsoumian12}. While we do not study bistatic SAR here, these
works are notable for their use of microlocal analysis and the
interpretation of the forward operator as a Fourier integral
operator. This is similar to our approach here.

The problem of object determination in SAR lends itself naturally to
microlocal analysis. For applications in target identification, it is
often only necessary to recover the outline of the object, which tends
to present itself as a jump discontinuity in the reconstruction
\cite{cheney03}. Under the flat surface approximation, Nolan and
Cheney showed the forward operator is a Fourier integeral operator
\cite{nolan04}. In \cite{nolan03}, they also give an example of SAR
with non-flat topography and mention some of its limitations. In this
paper we elaborate on these limitations, particularly in the case of
non-flat topography.

Even in a flat model of SAR, artifacts can appear. These artifacts are
explained by the fact that the canonical relation is two-to-one in a
neighborhood of the image of these artifacts. If, in addition, the
flight path is straight and at a constant altitude, there is a
well-known left-right ambiguity that obstructs the unique
reconstruction of the scene on both sides of the flight
path. Previously, using the restricted circular Radon transform as a
model, Stefanov and Uhlmann showed that this left-right ambiguity
persists in a microlocal sense when the flight path is curved
\cite{stefanov13}.

Their analysis is built upon a notion of ``mirror points'' which
characterize the artifacts in a way that is determined only by the
geometry of the surface and the flight path. They show that the
artifacts that appear at one mirror point are unitary images of the
real signal under a microlocal Fourier integral operator. In this way
each pair of mirror points induces an infinite family of singular
reflectivity functions whose image under the forward operator is
smooth. This phenomena, called ``cancellation of singularities,''
presents an obstruction to any unique and stable reconstruction of the
scene.

Our main goal is to characterize the artifacts in SAR with an
arbitrary topography and arbitrary flight path. This requires
analyzing the forward operator as a Fourier integral operator, which
we do in the second section. Examining its canonical relation leads us
to a generalization of the notion of a mirror points. In contrast to
the case of flat topography, we find that an infinite number of mirror
points may be related to a single observed singularity. Finally, we
show cancellation of singularities for pairs of isolated mirror
points, and present an example showing cancellation of singularities
for an infinite family of mirror points.

\section{Preliminaries}

\subsection{Model}

In this section, we summarize the standard model of SAR \cite{nolan03,
  nolan04}. Each component of the electromagnetic field satisfies the
wave equation $\partial_t^2u - c^2\Delta u = 0$. We assume the
following data are known:
\begin{enumerate}
\item The flight path of the aircraft, given by a smooth, embedded
  curve $\gamma$ parameterized with unit speed.
\item The surface of the Earth, $\Psi$, given by a smooth, embedded
  surface $\psi$.
\end{enumerate}
We will use $s$ to parameterize the flight path, and use $(u, v)$ to
locally parameterize $\Psi$. Let $c$ be the speed of electromagnetic
propagation; it is commonly assumed that $c$ is the speed of light in
air $c_0$ except for a singular perturbation supported on $\Psi$ of
the form:
\[
c_0^{-2} - c^{-2} = V(u, v)\delta(\psi(u, v) - (x, y, z)).
\]
Here, $c_0$ is the constant background speed of propagation and $V$ is
the ground reflectivity function, which we seek to recover from the
electromagnetic field measured on the antenna as it travels along the
flight path. We will assume that data is recorded over a finite
interval $\mathcal{Y} = (s_1, s_2) \times (t_1, t_2)$. Under some
assumptions on the geometry of the antenna, \cite{nolan03} finds the
map from $V$ to the recorded data is of the form,
\[
 FV(s, t) = \int_{\R \times \mathcal{X}} A(u, v, s, t,
 \omega)e^{-i\omega\left(t - \frac{2}{c_0}|\psi(u, v) -
   \gamma(s)|\right)}V(u, v) \, du\, dv\, d\omega,
\]
where $A$ is an amplitude of order two. By finite speed of
propagation, the amplitude $A$ is zero outside the ``visible'' set
\[
 \mathcal{X} = \left\{ (u, v) : \exists s \in (s_1, s_2),
 \frac{2}{c_0}|\psi(u, v) - \gamma(s)| \in (t_1, t_2) \right\}.
\]
In particular, this implies that $A$ has proper support.

In what follows we will assume that the wavefront set of $V$ is
contained in $T^*\mathcal{X}\setminus 0$, i.e., $V \in \cinf(\Psi
\setminus \overline{\mathcal{X}})$. This is not to imply that all
singularities in $T^*\mathcal{X}\setminus 0$ are recoverable from
$FV$; only that ``invisible'' ones cannot be.

In order to recover $V$ from $FV$, our goal will be to determine the
circumstances under which $F$ is a Fourier integral operator, and in
those cases apply microlocal techniques to recover the wavefront set
of $V$ uniquely. In some cases, the geometry of $\Psi$ and $\gamma$
prevent the unique recovery of singularities, and we provide for all
choices of $\Psi$ and $\gamma$ an infinite family of counterexamples
$V$ with nonempty wavefront set whose image, $FV$, is smooth --
therefore, the singularities of $V$ cannot be recovered.

We conclude this section with some convenient notation for the
geometry of the flight path, adopted from \cite{nolan04}.

\begin{notation}
We will write $R(u, v, s) = \psi(u, v) - \gamma(s)$. Then the time
required for a signal to leave $\gamma(s)$, reflect off $\psi(u, v)$,
and return is $2|R(u, v, s)|/c_0$. The reflection off $\psi(u, v)$
propagates in the direction $\hat{R}(u, v, s) = R(u, v, s)/|R(u, v,
s)|$.

In what follows, we often use the notation shown in Figure
\ref{notation}. Let the projection of the vector $\hat{R}(u, v, s)$
onto the tangent plane of $\Psi$ at $(u, v)$ be $\pi_{T\Psi}\hat{R}(u,
v, s)$.

\begin{figure}[h]
  \begin{center}
    \includegraphics[width=3in]{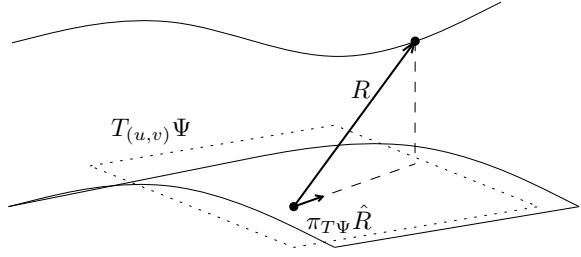}
    \caption{Notation for geometry of flight path and topography.}
    \label{notation}
  \end{center}
\end{figure}
\end{notation}

\subsection{Microlocal analysis of the forward operator}

The model described in the previous section takes the forward operator
to be an oscillating integral with phase
\[
 \phi(s, t, u, v, \omega) = \omega\left(t - \frac{2}{c_0}|R(u, v, s)|\right).
\]
We take $(s, t, \sigma, \tau)$ as local coordinates on
$T^*\mathcal{Y}$ and $(u, v, \xi, \eta)$ as local coordinates on
$T^*\mathcal{X}$.

\begin{proposition}
If $\mathrm{dist}(\Psi, \gamma) > 0$, then $F$ is a Fourier integral
operator associated to the Lagrangian submanifold
\begin{align*}
 \Lambda = \left\{ (s, t, \sigma, \tau; u, v, \xi, \eta) : \phantom{\frac{1}{2}}\right. 
&t = \frac{2}{c_0}|R(u, v, s)|, \\
& \sigma = \frac{2\tau}{c_0}\hat{R}(u, v, s)\cdot\dot{\gamma}(s), \\
& \left. (\xi, \eta) = -\frac{2\tau}{c_0}\pi_{T\Psi}\hat{R}(u, v, s) \right\}.
\end{align*}
\end{proposition}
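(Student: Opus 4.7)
The plan is to recognize $FV$ as an oscillatory integral with phase function $\phi(s,t,u,v,\omega) = \omega\bigl(t - \tfrac{2}{c_0}|R(u,v,s)|\bigr)$ and amplitude $A$, and to verify that $\phi$ is a smooth non-degenerate phase function in the sense of H\"ormander. The Lagrangian $\Lambda$ is then the image of the critical set $C_\phi = \{d_\omega \phi = 0\}$ under the map $(s,t,u,v,\omega) \mapsto (s,t,d_{s,t}\phi;\,u,v,d_{u,v}\phi)$, and the proposition reduces to computing these differentials explicitly.

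The hypothesis $\mathrm{dist}(\Psi,\gamma)>0$ is used first to ensure $|R(u,v,s)|\neq 0$ on the relevant set, so that $|R|$ is $C^\infty$ and hence $\phi$ is a smooth function, linear (and thus positively homogeneous of degree one) in $\omega$. Non-degeneracy then requires $d(d_\omega\phi)\neq 0$ on $C_\phi$; but $d_\omega\phi = t - \tfrac{2}{c_0}|R|$ has $\partial_t(d_\omega\phi)=1$, so $d(d_\omega\phi)$ is nonvanishing everywhere. With only one phase variable this is the full non-degeneracy condition.

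The remaining step is a direct calculation. Setting $d_\omega\phi=0$ gives $t = \tfrac{2}{c_0}|R|$, the first defining equation of $\Lambda$. Using the elementary identities
\[
\partial_s|R| = -\hat{R}\cdot\dot\gamma(s), \qquad \partial_u|R| = \hat{R}\cdot\partial_u\psi, \qquad \partial_v|R| = \hat{R}\cdot\partial_v\psi,
\]
one reads off $\tau = d_t\phi = \omega$, then $\sigma = d_s\phi = \tfrac{2\tau}{c_0}\hat{R}\cdot\dot\gamma$, and finally
\[
(\xi,\eta) = d_{u,v}\phi = -\tfrac{2\tau}{c_0}\bigl(\hat{R}\cdot\partial_u\psi,\;\hat{R}\cdot\partial_v\psi\bigr).
\]
Because $\partial_u\psi,\partial_v\psi$ span $T\Psi$, only the tangential component of $\hat{R}$ contributes to these inner products, so the right-hand side is precisely the coordinate expression of $-\tfrac{2\tau}{c_0}\pi_{T\Psi}\hat{R}$ viewed as a covector on $\mathcal{X}$ via the pullback under $\psi$ of the Euclidean metric on $\Psi$. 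This matches the stated $\Lambda$.

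\textbf{Main obstacle.} There is very little analytic difficulty: the $\omega t$ monomial makes non-degeneracy automatic, and the rest is chain-rule bookkeeping. The only delicate points I would watch are (i) the sign convention for reading $(\xi,\eta)$ off $d_{u,v}\phi$, which governs whether $\Lambda$ is regarded as a Lagrangian in $T^*(\mathcal{Y}\times\mathcal{X})$ or as a canonical relation after the antipodal twist on $T^*\mathcal{X}$, and (ii) the identification of the tangent vector $\pi_{T\Psi}\hat{R}$ with a covector on $\mathcal{X}$. One would also like to record the order of $F$ as an FIO (determined from the amplitude order, the number of phase variables, and the dimensions of $\mathcal{Y}$ and $\mathcal{X}$ by the standard formula), though this is not part of the statement.
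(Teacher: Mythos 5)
Your proposal is correct and follows essentially the same route as the paper: both treat $F$ as an oscillatory integral with phase $\phi = \omega(t - \tfrac{2}{c_0}|R|)$, use $\mathrm{dist}(\Psi,\gamma) > 0$ for smoothness of the phase, and obtain $\Lambda$ as the image of the critical set $\{\partial_\omega\phi = 0\}$ under $(s,t,u,v,\omega) \mapsto (s,t,d_{s,t}\phi; u,v,d_{u,v}\phi)$, the paper citing Duistermaat's Lemma 2.3.2 where you invoke H\"ormander's non-degenerate phase function framework. You merely make explicit the chain-rule identities, the non-degeneracy check via $\partial_t(\partial_\omega\phi) = 1$, and the metric identification of $\pi_{T\Psi}\hat{R}$ with a covector, all of which the paper leaves implicit.
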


\begin{proof}
The critical points of the phase function (with respect to $\omega$)
lie on the set
\[
 C = \left\{ t = \frac{2}{c_0}|R(u, v, s)| \right\}.
\]
The function $t - 2|R(u, v, s)|/c_0$ is smooth away from the set $\{
|R(u, v, s)| = 0 \}$. By our assumption that the flight path is
separated from the surface of the Earth, this poses no problem. By the
inverse theorem, $C$ is a submanifold of $\mathcal{Y}\times\mathcal{X}
\times (\R\setminus 0)$. Define $\Lambda$ to be the image of $C$ under
the map:
\[
 (s, t, u, v, \omega) \mapsto (s, t, \partial_s\phi, \partial_t\phi, u, v, \partial_u\phi, \partial_v\phi).
\]
By \cite[Lemma 2.3.2]{duistermaat96}, this is an immersed Lagrangian
submanifold of $(T^*\mathcal{Y} \times T^*\mathcal{X}) \setminus 0$
with respect to $\omega_{\mathcal{Y}} \oplus \omega_{\mathcal{X}}$,
where $\omega_{\mathcal{X}}$ and $\omega_{\mathcal{Y}}$ are the
canonical symplectic forms on $T^*\mathcal{X} \setminus 0$ and
$T^*\mathcal{Y} \setminus 0$, respectively.
\end{proof}

The canonical relation $\Lambda^\prime$ of $F$ is found by multiplying
the last two coordinates $(\xi, \eta)$ by $-1$. Recall that if $V \in
\cs(X)$, then
\[
 \WF(FV) \subset \{ q \in T^*\mathcal{Y}\setminus 0 : (p, q) \in \Lambda^\prime, p \in \WF(V) \}.
\]

As the data we can measure is confined to a finite interval $(s_1,
s_2), (t_1, t_2)$, the source of any singularity in the data is
confined to a subset of $T^*\mathcal{X} \setminus 0$ whose projection
to $\mathcal{X}$ is bounded. To see this, note that the canonical
relation requires the travel-time from the source of the signal to the
plane to be bounded by $t_2$. This also can be seen from finite speed
of propagation.

\subsection{Sets on which $\Lambda^\prime$ is locally of graph type}

Interpreting the forward operator $F$ as a Fourier integral operator
allows us to exploit the geometry of its canonical relation. In
particular, there are some microlocalizations of $F$ that are elliptic
in the sense of \cite[Def 25.3.4]{horm85}; that is, their canonical
relation is the graph of a symplectomorphism and their amplitude
(relative to the phase $\phi$ in the usual representation) is nonzero
in a neighborhood of their micro-support. 

Here we determine the largest set on which $\Lambda^\prime$ is locally
of graph type, with a view toward constructing microlocal parametrices
of $F$ later. At this point we distinguish between a canonical
relation which is the graph of a diffeomorphism and a canonical
relation which is the graph of a bijective diffeomorphism. However,
there is no distinction locally, as we can always restrict to a yet
smaller neighborhood on which the diffeomorphism is bijective.

\begin{proposition}
$\Lambda^\prime$ is a homogeneous canonical relation that is locally
  of graph type away from the degenerate set $\Sigma = \Sigma_1 \cup
  \Sigma_2$, where
\[
 \Sigma_1 = \left\{ (s, t, \sigma, \tau, u, v, \xi, \eta) : \pi_{T\Psi}\hat{R}(u, v, s) \parallel \nabla_{u, v}(\hat{R}(u, v, s) \cdot
 \dot{\gamma}(s)) \right\} \cap \Lambda^\prime,
\]
and
\[
 \Sigma_2 = \left\{ (s, t, \sigma, \tau, u, v, \xi, \eta) : \pi_{T\Psi}\hat{R}(u, v, s) \parallel \partial_s\pi_{T\Psi}\hat{R}(u, v, s) \right\} \cap \Lambda^\prime.
\]
\end{proposition}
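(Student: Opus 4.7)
The plan is to choose coordinates $(s, u, v, \tau)$ on $\Lambda'$ and compute the Jacobians of the two natural projections $\Lambda' \to T^*\mathcal{Y}$ and $\Lambda' \to T^*\mathcal{X}$ by hand. The first step is to read off from the defining relations of $\Lambda'$ (the twist of $\Lambda$ obtained by flipping the sign of $(\xi, \eta)$) that $t$, $\sigma$, $\xi$, $\eta$ are explicit smooth functions of $(s, u, v, \tau)$ on $(s_1, s_2) \times \mathcal{X} \times (\R\setminus 0)$. Homogeneity of $\Lambda'$ is then immediate: $t$ and $(u, v)$ are independent of $\tau$, while $\sigma$, $\xi$, $\eta$ are linear in $\tau$, so rescaling the cotangent fibers preserves $\Lambda'$.

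Graph type at a point is equivalent to either projection being a local diffeomorphism there, and since $\Lambda'$ is a Lagrangian of the correct dimension the two conditions are in fact equivalent. I would still compute both, because each computation yields one of the geometric obstructions in the statement. For the projection to $T^*\mathcal{Y}$, the Jacobian in the ordered basis $(s, t, \sigma, \tau)$ vs.\ $(s, u, v, \tau)$ is block-triangular with $1$s on the $s$ and $\tau$ diagonal entries, so its determinant collapses to
\[
\det\begin{pmatrix} \partial_u t & \partial_v t \\ \partial_u \sigma & \partial_v \sigma \end{pmatrix}.
\]
Substituting $\partial_{u,v}t = \frac{2}{c_0}\hat{R}\cdot\psi_{u,v}$ and $\partial_{u,v}\sigma = \frac{2\tau}{c_0}\partial_{u,v}(\hat{R}\cdot\dot{\gamma})$ makes this, up to the nonzero factor $\tau$, the Wronskian of the $\R^2$-vectors $\pi_{T\Psi}\hat{R}$ and $\nabla_{u,v}(\hat{R}\cdot\dot{\gamma})$; its vanishing is precisely the condition defining $\Sigma_1$. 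The symmetric calculation for the projection to $T^*\mathcal{X}$ reduces the Jacobian determinant to that of the $2\times 2$ block with columns $\partial_s(\xi,\eta)$ and $\partial_\tau(\xi,\eta)$, which after the substitution $(\xi,\eta) = \frac{2\tau}{c_0}\pi_{T\Psi}\hat{R}$ is, up to $\tau$, the Wronskian of $\pi_{T\Psi}\hat{R}$ and $\partial_s\pi_{T\Psi}\hat{R}$, giving $\Sigma_2$.

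The principal item requiring care is simply correctly identifying the column vectors that appear in the two determinants with the geometric quantities named in the statement — in particular checking that the pair $(\hat{R}\cdot\psi_u, \hat{R}\cdot\psi_v)$ is what the paper denotes by $\pi_{T\Psi}\hat{R}$ expressed in $(u,v)$-coordinates. As a sanity check, differentiating $R = \psi - \gamma$ directly gives $\partial_s(\hat{R}\cdot\psi_{u,v}) = -\partial_{u,v}(\hat{R}\cdot\dot{\gamma})$, so $\Sigma_1 = \Sigma_2$ as sets, consistent with the Lagrangian equivalence above. I do not anticipate any deeper obstacle; the proposition should follow by direct computation once coordinates are chosen.
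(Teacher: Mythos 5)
Your proposal is correct and takes essentially the same route as the paper: parameterize $\Lambda^\prime$ by $(s, \tau, u, v)$, compute the Jacobians of the two projections $\pi_L$ and $\pi_R$, and observe that each determinant collapses to a $2\times 2$ block whose vanishing is exactly the parallelism condition defining $\Sigma_1$ and $\Sigma_2$, respectively. Your closing identity $\partial_s(\hat{R}\cdot\psi_{u,v}) = -\nabla_{u,v}(\hat{R}\cdot\dot{\gamma})$, which shows $\Sigma_1 = \Sigma_2$ as sets (consistent with the general fact that the two projections of a Lagrangian drop rank together), is a correct refinement that the paper does not state, but it does not change the argument.
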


\begin{proof}
Let $\pi_L : \Lambda^\prime \to T^*\mathcal{Y}\setminus 0$ and $\pi_R
: \Lambda^\prime \to T^*\mathcal{X}\setminus 0$ be the canonical
projections of $T^*(\mathcal{Y}\times\mathcal{X})\setminus 0$ onto its
factors. We will show that $\Lambda^\prime$ is locally of graph type
away from $\Sigma$ by showing that $\nabla\pi_L$ is not of full rank
only on $\Sigma_1$ and $\nabla\pi_R$ is not of full rank only on
$\Sigma_2$.

We have the coordinate representation
\[
 \pi_L(s, \tau, u, v) = \left( s, \frac{2}{c_0}|R(u,v,s)|,
 \frac{2}{c_0}\tau \hat{R}(u, v, s) \cdot \dot{\gamma}(s), \tau
 \right)
\]
from which we can calculate
\[
 \nabla\pi_L =
\left[
\begin{array}{cccc}
1 & 0 & 0 & 0 \\
* & 0 & \frac{2}{c_0}\pi_1\pi_{T\Psi}\hat{R}(u, v, s) & \frac{2}{c_0}\pi_2\pi_{T\Psi}\hat{R}(u, v, s) \\
* & * & \partial_u(\frac{2\tau}{c_0}\hat{R}(u, v, s)\cdot\dot{\gamma}(s)) & \partial_v(\frac{2\tau}{c_0}\hat{R}(u, v, s)\cdot\dot{\gamma}(s)) \\
0 & 1 & 0 & 0
\end{array}
\right]
\]

Here $\pi_1, \pi_2$ are the natural projections onto the first and
second component of $\pi_{T\psi}\hat{R}(u, v, s)$.

As for $\pi_R$, we have that
\[
 \pi_R(s, \tau, u, v) = \left(u, v, \pi_1 \frac{2\tau}{c_0}\pi_{T\Psi}\hat{R}(u, v, s), \pi_2\frac{2\tau}{c_0}\pi_{T\Psi}\hat{R}(u, v, s) \right)
\]
Therefore,
\[
 \nabla \pi_R =
\left[
\begin{array}{cccc}
0 & 0 & \frac{2\tau}{c_0}\pi_1\partial_s\pi_{T\Psi}\hat{R}(u, v, s) & \frac{2\tau}{c_0}\pi_2\partial_s\pi_{T\Psi}\hat{R}(u, v, s) \\
0 & 0 & \frac{2}{c_0}\pi_1\pi_{T\Psi}\hat{R}(u, v, s) & \frac{2}{c_0}\pi_2\pi_{T\Psi}\hat{R}(u, v, s) \\
1 & 0 & * & * \\
0 & 1 & * & *
\end{array}
\right]
\]
This has full rank, provided $(s, \tau, u, v) \not\in \Sigma_2$.
\end{proof}

{\bf Remark}. In a flat topography, linear flight path model of SAR,
the degenerate set reduces to the set of covectors whose base points
lie directly under the flight path. This is related to the well-known
difficulty of imaging below the flight path in SAR.

If instead the flight path is curved, the degenerate set contains for
each base point on the flight path a one-dimensional family of
covectors. These covectors have the property that the line through
them is tangent to the flight path at the base point. That the
canonical relation of $F$ is not of graph type near these covectors
was shown in \cite{nolan04}.

Finally, \cite{nolan03} mentions the difficulty of imaging points on a
non-flat surface that are local minima of the travel-time function
$2|R(u, v, s)|/c_0$. When $(u, v, s)$ is such a point,
$\hat{R}(u,v,s)\nabla\psi(u, v)$ = 0, and so by the above analysis
both left and right projections drop at least one rank.

The degenerate set $\Sigma \subset \Lambda^\prime$ is then the
complement of the largest open subset of $\Lambda^\prime$ which is
locally of graph type. For any point $(p, q) \in \Lambda^\prime$,
there exists a small neighborhood that is not only of graph type, but
is also such that it is the graph of a bijective diffeomorphism. These
neighborhoods will play an essential role in constructing microlocal
parametrices later on.

\section{Mirror points}

The most striking difference between SAR on a flat topography (with
either curved or straight flight path) and SAR on a non-flat
topography is the geometry of sets of mirror points. In the flat case
each point in $T^*\mathcal{Y}\setminus 0$ is associated to at most two
mirror points \cite{nolan04, stefanov13}. In this section, we will
show that such mirror point sets in SAR with non-flat topography may
either not be discrete or may contain more than two mirror points.

\begin{defn}
Fix $p \in T^*\mathcal{Y}\setminus 0$. Then the set of mirror points
associated to $p$ is the set
\[
 M_p = \{ q \in T^*\mathcal{X}\setminus 0 : (q, p) \in \Lambda^\prime
 \} \subset T^*\mathcal{X}\setminus 0.
\]
In other words, if we interpret $\Lambda^\prime$ as a (possibly
multi-valued) function from $T^*\mathcal{X}\setminus 0$ to
$T^*\mathcal{Y}\setminus 0$, $M_p$ is the inverse image of $p$ under
$\Lambda^\prime$.
\end{defn}

In this paper, we consider only mirror points associated to the same
covector along the flight path. However, there is a different, broader
notion of mirror points discussed in \cite{stefanov13}, which are
associated to more than one covector via a billard-like flow. In the
non-flat case, the presence of possibly infinite families of mirror
points makes this sort of global analysis more difficult. This is why
we restrict ourselves to the microlocal notion of mirror points
associated to the same covector.

The following proposition shows that, away from a degenerate set, the
microlocal analysis of mirror points on a non-flat topography is
similar to that on a flat topography, insofar as non-degenerate mirror
points are isolated. However, non-degenerate mirror points may also be
mirror to an infinite family of degenerate mirror points -- a
phenomena which does not occur in the flat case. Further, in some
cases, all singularities may be degenerate.

\begin{proposition}
Fix $p \in T^*\mathcal{Y}\setminus 0$. Let $\Sigma_p$ be the set of
points $q$ such that $(p, q) \in \Sigma$, the degenerate set of the
canonical relation. Let $M_p$ be the set of mirror points associated
to $p$. Then $M_p \setminus \Sigma_p$ consists of isolated covectors.
\end{proposition}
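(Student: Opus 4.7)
The plan is to apply the preceding proposition, which shows that at every point of $\Lambda^\prime \setminus \Sigma$ the differential of the left projection $\pi_L : \Lambda^\prime \to T^*\mathcal{Y}\setminus 0$ has full rank. Fix $q \in M_p \setminus \Sigma_p$, so that the corresponding point $(p, q) \in \Lambda^\prime$ lies off of $\Sigma$. Since $\dim\Lambda^\prime = \dim T^*\mathcal{Y} = 4$, the inverse function theorem furnishes an open neighborhood $U \subset \Lambda^\prime$ of $(p, q)$ on which $\pi_L|_U$ is a diffeomorphism onto its image, and in particular injective.

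The conclusion then follows by a short sequential argument. Suppose toward contradiction that $q$ is not isolated in $M_p$; then there is a sequence $q_n \in M_p \setminus \{q\}$ with $q_n \to q$ in $T^*\mathcal{X}\setminus 0$. Each $(p, q_n)$ lies in $\Lambda^\prime$ and $(p, q_n) \to (p, q)$ in $\Lambda^\prime$, so for all sufficiently large $n$ the points $(p, q_n)$ lie in $U$. But $\pi_L(p, q_n) = p = \pi_L(p, q)$ with $(p, q_n) \neq (p, q)$, contradicting the injectivity of $\pi_L|_U$. Hence $q$ is isolated in $M_p$.

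Since the substantive geometric content already sits inside the preceding proposition, I do not expect a serious obstacle. The only point worth pausing on is that this isolation statement requires only the full rank of $\nabla\pi_L$ near $(p, q)$, so the argument actually uses nothing about $\Sigma_2$ and would go through on the a priori larger set $\Lambda^\prime \setminus \Sigma_1$. Phrasing the proposition in terms of the full $\Sigma$ is natural anyway, because $\Sigma_2$ enters at the next stage, when one wants to invert $\pi_R$ in order to construct the microlocal parametrices alluded to in the preceding subsection.
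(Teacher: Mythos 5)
Your proof is correct and takes essentially the same route as the paper's: the paper likewise localizes at $(p,q) \in \Lambda^\prime \setminus \Sigma$, where the preceding proposition makes the relation locally the graph of a bijective diffeomorphism, and concludes isolation from local injectivity onto a neighborhood of $p$ --- you have simply spelled out the inverse-function-theorem and sequence details that the paper's three-line proof leaves implicit. Your closing remark is also accurate: the isolation of points of $M_p$ only requires $\pi_L$ to be a local diffeomorphism, hence only $(p,q) \notin \Sigma_1$, while $\Sigma_2$ (full rank of $\nabla\pi_R$) is what the paper needs later for the graph-type property and the microlocal parametrices of Theorem 1.
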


\begin{proof}
By definition $\Sigma_p \subset M_p$. Recall $\Lambda^\prime$ is
locally of graph type near $(p, q) \in \Lambda^\prime$ for all $q \in
M_p \setminus \Sigma_p$. Then, there are suitably small neighborhoods
of each $q$ such that $\Lambda^\prime$ acts as a bijective
diffeomorphism on each neighborhood. Therefore, the non-degenerate
mirror points are isolated.
\end{proof}

{\bf Remark}. If the topography is flat and the flight path is
straight, these non-degenerate mirror points appear to the left and
right of the flight path, causing left-right ambiguity. If the flight
path is curved, the non-degenerate mirror points again come in pairs,
as studied in \cite{stefanov13}.

\begin{figure}[h]
  \begin{center}
    \includegraphics[width=3.5in]{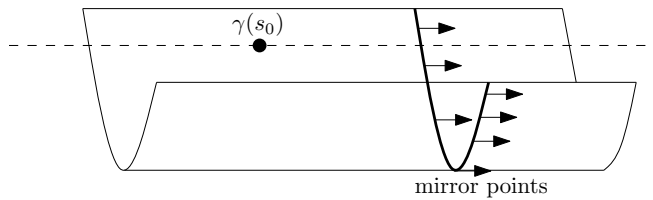}
    \caption{A cylindrical topography with flight path along
    the axis of rotation.}
    \label{cylinder}
  \end{center}
\end{figure}

In Figure \ref{cylinder}, we show an example of a cylindrical
topography, which was also considered in \cite{nolan03}. Let $t >
2r/c_0$, where $r$ is the radius of the cylinder. Associated to every
covector over $(s_0, t_0)$ is an infinite family covectors, with
direction parallel to the axis of the cylinder. Similar examples with
polygonal cross-sections (whose corners have been suitably rounded)
can be constructed to yield any number of mirror points associated to
the same covector.

\section{Cancellation of singularities}

In this section, we consider when signals in the data which would
otherwise be observed in the data cancel each other out, in the manner
of destructive interference. We show that for every pair of
non-degenerate mirror points, there exists an infinite-dimensional
family of ground reflectivity functions $V$ whose image under the
forward operator $F$ is smooth. In other words, the microlocal kernel
of $F$ is infinite-dimensional, which obstructs a stable
reconstruction.

Recall that for a fixed $p \in T^*\mathcal{Y}\setminus 0$, the set of
mirror points associated with $p$ are denoted $M_p$. The first results
of this section concern the case where $M_p$ contains at least two
distinct, isolated mirror points, echoing the situation of SAR with
flat topography.

\subsection{Non-degenerate mirror points}

Fix $p \in T^*\mathcal{Y}\setminus 0$. In this section, we consider
the case when two isolated mirror points are associated to $p$. In
general, the number of isolated mirror points associated to $p$ may be
greater than two. This contrasts with the case of flat topography,
which has at most two mirror points associated to any signal. Our
result in this case generalizes \cite[Theorem 2.1]{stefanov13} to a
different model of SAR taking into account non-flat topography,
arbitrary flight paths, and generic incident waves.

\begin{theorem}
Let $\Gamma \subset T^*\mathcal{Y}\setminus 0$ be a small, open conic
neighborhood of $p$, and suppose there exist isolated mirror points
$q_1, q_2 \in M_p$ associated to $p$. Assume the amplitude of the
forward operator $F$ is nonzero in a neighborhood of
$(\pi_{\mathcal{X}}(q_1), \pi_{\mathcal{Y}}(p)) \times \R\setminus 0$
and $(\pi_{\mathcal{X}}(q_2), \pi_{\mathcal{Y}}(p)) \times \R\setminus
0$. Let $\Gamma_1, \Gamma_2 \subset T^*\mathcal{X}\setminus 0$ be
small, open conic neighborhoods of $q_1, q_2$ respectively whose image
under $\Lambda^\prime$ is $\Gamma$.

Then for every $V_1 \in \cs(\mathcal{X})$ such that $\WF(V_1) \subset
\Gamma_1$, there exists $V_2 \in \cs(\mathcal{X})$ with $\WF(V_2)
\subset \Gamma_2$ related by a Fourier integral operator whose
canonical relation is a diffeomorphism between $\Gamma_1$ and
$\Gamma_2$ such that $\WF(F(V_1 + V_2)) \cap \Gamma = \emptyset$.
\end{theorem}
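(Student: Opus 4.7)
The plan is to microlocalize $F$ near each mirror point $q_1, q_2$ to obtain elliptic Fourier integral operators with local canonical graph relations, invert them to construct microlocal parametrices, and then use the parametrices to define $V_2$ so that the images $FV_1$ and $FV_2$ cancel on $\Gamma$ modulo smooth functions.

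First I would choose properly-supported pseudodifferential cutoffs $\Psi_i$ ($i = 1, 2$) that are microlocally the identity on $\Gamma_i$, have essential support in slightly larger disjoint conic neighborhoods $\tilde\Gamma_i \supset \Gamma_i$ both avoiding $\Sigma$, and on which $\Lambda^\prime$ restricts to the graph of a symplectomorphism $\chi_i$ with $\chi_i(q_i) = p$. By the previous proposition together with the non-vanishing amplitude hypothesis, $F_i := F \Psi_i$ is an elliptic FIO associated to the graph of $\chi_i$. The standard construction for elliptic FIOs with local canonical graph (e.g., Hörmander \cite{horm85}, Theorem 25.3.8) then yields a microlocal parametrix $G_i$ of opposite order, associated to the graph of $\chi_i^{-1}$, with $F_i G_i \equiv I$ near $p$ and $G_i F_i \equiv I$ near $q_i$ modulo smoothing.

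Now set
\[
 V_2 = -G_2 F_1 V_1.
\]
The composition $G_2 F_1$ is a FIO whose canonical relation is $\chi_2^{-1} \circ \chi_1$, a diffeomorphism $\Gamma_1 \to \Gamma_2$; this is the FIO relating $V_2$ to $V_1$, and propagation of wavefront sets gives $\WF(V_2) \subset \Gamma_2$. Since $\WF(V_1) \subset \Gamma_1$ lies in the elliptic set of $\Psi_1$, we have $V_1 \equiv \Psi_1 V_1$ modulo $\cinf$, so $FV_1 \equiv F_1 V_1$ microlocally on $\Gamma$; the analogous statement holds for $V_2$ and $F_2$. Consequently,
\[
 F(V_1 + V_2) \equiv F_1 V_1 + F_2 V_2 = F_1 V_1 - F_2 G_2 F_1 V_1 \equiv 0
\]
modulo $\cinf$ on $\Gamma$, yielding $\WF(F(V_1 + V_2)) \cap \Gamma = \emptyset$.

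The main obstacle is managing the microlocal supports so that no contribution from other branches of $\Lambda^\prime$ leaks into $\Gamma$, and so that each composition in the FIO calculus is transversal. Both issues are handled by shrinking $\Gamma_1, \Gamma_2$ sufficiently around the isolated non-degenerate mirror points $q_1, q_2$ supplied by the preceding proposition, and by observing that all the canonical relations in sight are local symplectomorphisms, so the clean intersection hypotheses in the FIO composition calculus are automatic.
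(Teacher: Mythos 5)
Your proposal is correct and follows essentially the same route as the paper: microlocalize $F$ near each isolated mirror point so that the restricted canonical relation is a canonical graph, invoke ellipticity (in the sense of \cite[Def 25.3.4]{horm85}) to obtain microlocal parametrices, and set $V_2 = -F_2^{-1}F_1V_1$ so the two contributions cancel on $\Gamma$ modulo $\cinf$. The only cosmetic difference is that the paper cuts off on both sides, writing $F_i = PFP_i$ with a pseudodifferential cutoff $P$ microsupported on $\Gamma$ in the data variables, which confines the canonical relation to $\Gamma_i \times \Gamma$ by construction, whereas you localize only on the source side and control the other branches of $\Lambda^\prime$ by the shrinking argument you describe --- both handle the branch-leakage issue at the same level of rigor.
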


\begin{proof}
As $q_1, q_2$ are isolated, there is a conic neighborhood of both such
that the restriction of $\Lambda^\prime$ to either is the graph of a
function. After perhaps shrinking both neighborhoods and $\Gamma$, we
can construct two conic neighborhoods $\Gamma_1, \Gamma_2$ of each
covector respectively such that the restriction of $\Lambda^\prime$ to
either set is the graph of a bijective diffeomorphism onto $\Gamma$.

Let $\chi_1, \chi_2 \in C^\infty(T^*\mathcal{X}\setminus 0)$ be two
cut-off functions such that $\chi_i$ is supported on
$\overline{\Gamma_i}$, nonzero on $\Gamma_i$, and homogeneous of
degree zero in the fiber variables. We write the quantization
$\chi_i(u, v, D_u, D_v)$ as $P_i$, which is a pseudodifferential
operator of order zero. Similarly, let $\chi \in
C^\infty(T^*\mathcal{Y}\setminus 0)$ be a cut-off function with the
same properties on $\Gamma$, and write $\chi(s, t, D_s, D_t) = P$.

By construction, $F_i = PFP_i$ is a Fourier integral operator with
canonical relation equal to that of $\Lambda^\prime$ restricted to
$\Gamma_i \times \Gamma$. This canonical relation is the graph of a
diffeomorphism, and moreover that diffeomorphism is
bijective. Further, the amplitude of $F_i$ is nonzero. They are
elliptic in the sense of \cite[Def 25.3.4]{horm85}, so there exist two
microlocal parametrices $F_i^{-1}$.

Given $V_1 \in \cs(\mathcal{X})$ with $\WF(V_1) \subset \Gamma_1$, the
calculus of Fourier integral operators implies that $V_2 =
-F_2^{-1}F_1V_1$ has $\WF(V_2) \subset \Gamma_2$. From here we can
calculate,
\begin{align*}
 F(V_1 + V_2) & = FV_1 + -FF_2^{-1}F_1V_1 \\ 
\intertext{As the $\WF(F_2^{-1}F_1V_1) \subset \Gamma_2$,
  $-FF_2^{-1}F_1V_1 = -F_2F_2^{-1}F_1V_1$ modulo a smoothing operator
  applied to $V_1$, and similarly for $FV_1$.}
 F(V_1 + V_2) &= F_1V_1 - F_2F_2^{-1}F_1V_1 \pmod \cinf \\
  &= F_1V_1 - F_1V_1 \pmod \cinf \\
  &= 0 \pmod \cinf.
\end{align*}
As we have applied a microlocal cut-off supported on $\Gamma$, this
shows the wavefront set of $F(V_1 + V_2)$ does not intersect $\Gamma$.
\end{proof}

Each pair of isolated mirror points therefore induces an
infinite-dimensional family of counter-examples microlocally supported
near that pair. So, at best, any reconstruction of $\WF(V)$ from $FV$
can only proceed up to sets of mirror points. This accounts for the
artifacts seen in many forms of SAR.

In addition, this theorem can be extended to any subset of
non-degenerate mirror points. Given $V_i \in \cs(X), i = 1,\dots, n-1$
microlocally supported near a family $q_i, i = 1,\dots, n$ of
non-degenerate mirror points, the proof above shows that there is
\[
 V_n = -F_n^{-1}\sum_{i=1}^{n-1} F_i V_i
\]
microlocally supported near $q_n$ such that
\[
 \sum_{i=1}^n FV_i \in \cinf.
\]
This yields yet more counter-examples to stable reconstruction.

So far we have only considered mirror points associated to the same
covector above the flight path. As mentioned earlier, there is also a
notion of mirror points associated to multiple covectors, considered
in the flat case by \cite{stefanov13}.

For example, let $q_1, q_2$ be associated to $p_1$, and $q_2, q_3$ be
associated to $p_2$. If Theorem 1 holds for $(q_1, q_2, p_1)$ and
$(q_2, q_3, p_2)$ separately, then one cancel the singularity of
$V_1$, microlocally supported near $q_1$, with $V_3$, microlocally
supported at $q_3$ where,
\[
 V_3 = -F_3^{-1}F_2F_2^{-1}F_1V_1.
\]
A similar argument shows that $F(V_1 + V_3)$ is smooth.

\subsection{Degenerate mirror points}

Finally, we present an example showing that infinite families of
mirror points also permit the cancellation of singularities. It is
important to note that the proof of Theorem 1 depends crucially on the
mirror points being non-degenerate -- otherwise, we cannot pass to a
microlocalization whose canonical relation is of graph type. For this,
we return to the example shown in Figure \ref{cylinder}.

{\bf Example}. Let $\Psi$ be the cylinder given by $\psi(u, v) = (\cos
u, v, 1-\sin u)$, where $(u, v) \in (0, \pi) \times \R$, and
$\gamma(s) = (0, s, 1)$. Let $V = f(u)H(v)$ where $H(v) = \chi_{[0,
    \infty)}$ is the Heaviside function and $f(u) \in \cinf((0,
  \pi))$. Assume $A = 1$ uniformly.

When $A = 1$, the forward operator reduces to the Fourier transform of
a delta function.
\begin{align*}
 FV(s, t) &= \int_{\R\times\Psi} e^{-i\omega\left(t -
   \frac{2}{c_0}\sqrt{(v-s)^2 + 1}\right)} f(u)H(v) \, du\, dv\,
 d\omega \\
  &= \left\langle \delta\left(t - \frac{2}{c_0}\sqrt{(v-s)^2 + 1} \right), f(u)H(v) \right\rangle \\
  &= \left\langle \delta \left(t - \frac{2}{c_0}\sqrt{(v-s)^2 + 1} \right), H(v)\right\rangle \int_0^\pi f(u)\, du.
\end{align*}
The map
\[
 w(v) = t - \frac{2}{c_0}\sqrt{(v-s)^2 + 1}
\]
is two-to-one onto the interval $(-\infty, t - 2/c_0)$. To calculate
the pull-back, we divide the domain of $w$ into two intervals,
$(-\infty, s)$ and $(s, \infty)$. Then there are two inverses $v_-$
and $v_+$ with range on each interval, respectively:
\[
 v_\pm(w) = s \pm \sqrt{\frac{c_0^2}{4}(w - t)^2 - 1}.
\]
In either case, the derivative is non-zero on $(-\infty, t - 2/c_0)$,
and explicitly,
\[
 \left| \frac{dv_\pm}{dw}(w) \right| = \frac{c_0^2}{4}\frac{t - w}{\sqrt{c_0^2(t-w)^2/4 - 1}}.
\]
Using this, we may calculate the pullback of $\delta(w)$ via
$w(v)$. Let 
\[ \alpha(t) = \sqrt{c_0^2t^2/4 - 1}. \]
Then:
\begin{align*}
 \left\langle \delta(w(v)), H(v)\right\rangle &= \left\langle \delta(w), \left| \frac{dv_\pm}{dw}(w)\right|[H(v_-(w)) + H(v_+(w))] \right\rangle \\
   &= \frac{c_0^2}{4} \frac{t}{\sqrt{c_0^2t^2/4 - 1}}[H(s+\alpha(t))+H(s-\alpha(t))] 
\end{align*}
So, the forward operator reduces to
\[
 FV(s, t) = \frac{c_0^2}{4}\frac{t[H(s+\alpha(t))+H(s-\alpha(t))]}{\sqrt{c_0^2t^2/4 - 1}} \int_0^\pi f(u)\, du,
\]
which vanishes whenever the integral of $f$ vanishes. There is a
subspace of $\cinf((0, \pi))$ for which this is true with infinite
dimension. Since
\[ 
 \WF(f(u)H(v)) = (\{v=0\} \times \{\xi = 0\}) \setminus 0
\] 
and $\WF(FV(f(u)H(v))) = \emptyset$, this shows that singularities on
degenerate mirror points may also cancel.

\bibliographystyle{abbrv}
\bibliography{master}

\end{document}